\newtheorem{thm}[equation]{Theorem}
\newtheorem{prop}[equation]{Proposition}
\theoremstyle{definition}
\theoremstyle{remark}
\newtheorem{rem}[equation]{Remark}
\newcommand{\abs}[1]{\left\vert#1\right\vert}
\newcommand{\eps}{\varepsilon}
\newcommand{\To}{\longrightarrow}
\newcommand{\C}[1]{\mathcal{#1}}
\newcommand{\spec}{\mathcal{S}p}
\newcommand{\EZDIAG}[5]{\xymatrix@C+=2.5cm{*+[r]{#1}
\ar@(u,l)_(0.62){\displaystyle #5}[]
\ar@<.5ex>^-{#3}[r]&\ar@<.5ex>^-{#4}[l]#2}}
\newcommand{\td}[1]{\langle #1\rangle}
\def\r{\rightarrow} 
\def\hom{\operatorname{Hom}}
\def\ho{\operatorname{Ho}}
\def\m{\mathfrak{m}}
\def\st{\stackrel} 
\def\Z{\mathbb{Z}}
\def\S{\Sigma}
\begin{document}

\title{Triangulated categories without models}%

\author{Fernando Muro}
\address{Universitat de Barcelona, Departament d'$\grave{\text{A}}$lgebra i Geometria, Gran Via de les Corts Catalanes, 585, 08007 Barcelona, Spain}
\email{fmuro@ub.edu}

\author{Stefan Schwede}
\address{Mathematisches Institut, Universit\"at Bonn, Beringstr.~1, 53115 Bonn, Germany}
\email{schwede@math.uni-bonn.de}

\author{Neil Strickland}
\address{Department of Pure Mathematics, University of Sheffield,
Hicks Building, Hounsfield Road, Sheffield S3 7RH, UK}
\email{n.p.strickland@sheffield.ac.uk}

\thanks{The first author was partially supported
by the Spanish Ministry of Education and Science under MEC-FEDER grants
MTM2004-01865 and MTM2004-03629, the postdoctoral fellowship EX2004-0616, and a Juan de la Cierva research contract.}
\subjclass{18E30, 55P42}
\keywords{Triangulated category, stable model category}%

\begin{abstract}
We exhibit examples of triangulated categories which are neither 
the stable category of a Frobenius category nor a 
full triangulated subcategory of the homotopy category of 
a stable model category. Even more drastically,
our examples do not admit any non-trivial exact functors to or from
these algebraic respectively topological triangulated categories.
\end{abstract} \maketitle

\subsection*{Introduction}

Triangulated categories are fundamental tools in both algebra and topology. In algebra they often arise as the stable category of a Frobenius category (\cite[4.4]{shc}, \cite[IV.3 Exercise 8]{mha}). In topology they usually appear as a full triangulated subcategory of the homotopy category of a Quillen stable model category~\cite[7.1]{hmc}. The triangulated categories which belong, up to exact equivalence, to one of these two families will be termed \emph{algebraic} and \emph{topological}, respectively. We borrow this terminology from \cite[3.6]{odgc} and \cite{avttc}. Algebraic triangulated categories are generally also topological, but there are many well-known examples of topological triangulated categories which are not algebraic.

In the present paper we exhibit examples 
of triangulated categories which are neither algebraic nor topological.
As far as we know, these are the first examples of this kind.
Even worse (or better, depending on the perspective), 
our examples do not even admit non-trivial exact functors
to or from algebraic or topological triangulated categories.
In that sense, the new examples are completely orthogonal
to previously known triangulated categories.

Let $(R,\m)$ be a commutative local ring with $\m=(2)\neq 0$ and $\m^2=0$.  Examples of this kind of rings are $R=\Z/4$, or more generally $R=W_2(k)$ the $2$-typical Witt 
vectors of length $2$ over a perfect field $k$ of characteristic $2$. There are also examples which do not arise as Witt vectors, for instance the localization of the polynomial ring $\Z/4[t]$ at the prime ideal $(2)$. We denote by $\C{F}(R)$ the category of finitely generated free $R$-modules. 

\begin{thm}\label{rara}
The category $\C{F}(R)$ has a unique structure of a triangulated category 
with identity translation functor and such that the diagram
$$ R\st{2}\To R\st{2}\To R\st{2}\To R $$
is an exact triangle.
\end{thm}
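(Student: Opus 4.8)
The plan is to construct the triangulation completely explicitly, by hand, since the intended conclusion forbids importing it from a model. I would first record the two arithmetic facts that make everything work. Because $\m^2=0$ we have $4=0$, so on the one hand $2\cdot 2=0$ in $R$ (the consecutive composites in the proposed sequence vanish, so it really is a candidate triangle) and on the other hand $-2=2$ in $R$ (the proposed triangle is carried to itself by rotation, absorbing the sign that rotation introduces). Moreover the kernel and the image of $2\colon R\To R$ both equal $\m$, so applying $\C{F}(R)(R,-)=\hom_R(R,-)$ to the proposed triangle yields the \emph{periodic} sequence $\cdots\To R\st{2}\To R\st{2}\To R\st{2}\To R\To\cdots$, which is exact. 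Finally $\C{F}(R)$ is additive and idempotent complete (a summand of a finitely generated free module over a local ring is free), and the identity functor is a legitimate translation; since $2\neq 0$ the $2$-triangle is not contractible, so the structure is genuinely new.

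The backbone is a normal form for morphisms. I would prove a Smith-type lemma: after composing with suitable isomorphisms of source and target, every morphism in $\C{F}(R)$ is a finite direct sum of copies of the four elementary morphisms $R\st{1}\To R$, $R\st{2}\To R$, $R\To 0$ and $0\To R$. This is local-ring linear algebra: a unit entry serves as a pivot that splits off an $R\st{1}\To R$ summand, and once all entries lie in $\m=(2)$ the matrix equals $2$ times a matrix that is well defined over $k=R/\m$ (here $\m^2=0$ is used), whose $k$-rank normal form produces the $R\st{2}\To R$ summands together with the zero rows and columns. I would then \emph{define} a candidate triangle to be distinguished exactly when it is isomorphic to a finite direct sum of the four elementary triangles $R\st{2}\To R\st{2}\To R\st{2}\To R$, $R\st{1}\To R\To 0\To R$, $0\To R\st{1}\To R\To 0$ and $R\To 0\To R\st{1}\To R$, the last three being the contractible triangles that any triangulation is forced to contain.

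With this definition the first two axioms are quick: closure under isomorphism is built in, the identity triangle is of the second type, and every $f$ embeds in a distinguished triangle because its normal form presents it as a direct sum of elementary morphisms, each of which is the first map of an elementary triangle, after which one transports back along the chosen isomorphisms. Rotation closure (TR2) holds because rotation permutes the three contractible types cyclically and fixes the $2$-triangle (using $-2=2$ to kill the sign), and rotation commutes with $\oplus$. The substance lies in the filling axiom (TR3) and the octahedral axiom (TR4), where the normal form alone does not suffice, since a morphism between two distinguished triangles, and a pair of composable morphisms, cannot in general be diagonalized compatibly with the elementary decompositions. My plan is to establish a decomposition lemma giving an intrinsic description of the distinguished triangles — the natural candidate being that a candidate triangle is distinguished precisely when its periodic $\hom_R(R,-)$-sequence is exact — the delicate direction being that such exactness, via the normal form and $\m^2=0$, forces a splitting into elementary triangles. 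Granting this, TR3 and TR4 reduce to building the comparison map, respectively the octahedral maps, explicitly as matrices, checking that consecutive composites vanish using $4=0$, and verifying the $\hom_R(R,-)$-exactness of the newly produced triangle through the long exact sequences supplied by the triangles already known to be distinguished. I expect the octahedral axiom to be the principal obstacle, since there the composite $gf$ resists simultaneous diagonalization and one must assemble the octahedron from explicit cones before the decomposition lemma can be applied.

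Uniqueness is then essentially formal. In any triangulation with identity translation in which the given triangle is exact, the three contractible triangles are forced by TR1 and TR2, the $2$-triangle is distinguished by hypothesis, and finite direct sums of distinguished triangles are always distinguished; hence every direct sum of elementary triangles is distinguished in any such structure. Conversely, the cone of a morphism is unique up to isomorphism and additive in the morphism, so the normal form shows that the cone of an arbitrary $f$ is forced to be the corresponding direct sum of the cones $0,R,R,R$ of the four elementary morphisms. Since every distinguished triangle is isomorphic to the cone triangle of its first map, the distinguished triangles are exactly the direct sums of elementary triangles, independently of the chosen structure.
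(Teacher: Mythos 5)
Your proposal has a genuine gap, and it sits exactly where the content of the theorem lies. The ``decomposition lemma'' on which you condition the entire verification of TR3 and TR4 --- that a candidate triangle is distinguished precisely when its periodic $\hom_R(R,-)$-sequence is exact (since $\hom_R(R,M)\cong M$, this is what the paper calls a \emph{quasi-exact} triangle) --- is false. Take $R=\Z/4$ and $A=\begin{pmatrix}1&1\\0&1\end{pmatrix}$, and consider the candidate triangle $T\colon R^2\st{2A}\To R^2\st{2}\To R^2\st{2}\To R^2$. Each map has kernel and image equal to $2R^2$, so $T$ is quasi-exact. But $T$ is not isomorphic to a direct sum of a contractible triangle and a triangle of the form $X_2$: reducing mod $\m$ kills all differentials of $T$, and since having zero differentials is preserved under isomorphism of complexes, any contractible summand $C$ would have zero differentials mod $\m$, whence $\mathrm{id}=dh+hd=0$ on $C/\m C$ and $C=0$ by Nakayama; so $T$ would have to be isomorphic to $(R^2)_2$ itself, and any morphism of candidate triangles $(\alpha,\beta,\gamma)$ from $T$ to $(R^2)_2$ satisfies $2\alpha=2\beta=2\gamma$ and $2\beta A=2\alpha$, hence $\bar{\beta}(\bar{A}-I)=0$ over $\Z/2$, which is impossible for invertible $\beta$ because $\bar{A}\neq I$. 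This is precisely the subtlety recorded in Remark~\ref{rem-exactness}: exactness is equivalent to quasi-exactness \emph{plus} the monodromy condition $\sigma^3=1$ on $H_*(2T)$ (for this $T$ the monodromy is induced by $\bar{A}\neq 1$), and the authors state explicitly that they do not know how to prove the theorem starting from that intrinsic characterization. So the step you introduce with ``granting this'' cannot be granted, and your plan for the filling and octahedral axioms collapses at its foundation.

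For comparison, the paper never verifies the classical octahedron at all: it works with Neeman's equivalent axiom set, in which TR3 and TR4 are replaced by the requirement that every commutative square between exact triangles extends to a morphism whose \emph{mapping cone} is again exact. This is then checked by cases: a contractible row against a quasi-exact row (using projectivity of the modules, and self-injectivity of $R$ together with the duality $\hom_R(-,R)$ for the transposed case), and the essential case of two rows $X_2$ and $Y_2$, where the filler is chosen of the special form $\gamma=\beta+2\delta$ for a judiciously chosen $\delta$, and an explicit homotopy turns the morphism into a diagonal one whose cone is visibly a direct sum of elementary exact triangles. Parts of your proposal are sound: the Smith-type normal form is correct and is a nice elementary substitute for the paper's appeal to Cohen's structure theorem (by which $R$ is a quotient of a discrete valuation ring), and your uniqueness argument is essentially identical to the paper's. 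But without either a correct intrinsic exactness criterion or the mapping-cone-axiom strategy, the existence half of the theorem remains unproved.
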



Given an object $X$ in an {\em algebraic} triangulated category $\C{T}$ 
and an exact triangle
$$A\st{2\cdot 1_A}\To A\To C\To\S A,$$
the equation $2\cdot 1_C=0$ holds, compare \cite[3.6]{odgc} and \cite{avttc}. 
Since the ring $R$ satisfies $2\cdot 1_R\neq 0$,
the triangulation of the category $\C{F}(R)$ is not algebraic. 
We cannot rule out the possibility of a topological model 
for $\C{F}(R)$ as easily:
the classical example of $A=S$ the sphere spectrum 
in the stable homotopy category shows that the morphism
$2\cdot 1_C$ can be nonzero in this more general context. 

Nevertheless, $\C{F}(R)$ is not topological either,
which follows from Theorem~\ref{raro}.
Here we call an exact functor between triangulated categories {\em trivial} 
if it takes every object to a zero object.

\begin{thm}\label{raro}
Every exact functor from $\C{F}(R)$ to a topological triangulated category
is trivial.
Every exact functor from  a topological triangulated category 
to $\C{F}(R)$ is trivial.
\end{thm}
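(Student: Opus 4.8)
The plan is to route both statements through the image of the single exact triangle $R\st{2}\To R\st{2}\To R\st{2}\To R$ of Theorem~\ref{rara}, and to exploit the structural property of topological triangulated categories that plays, for them, the role that the vanishing $2\cdot 1_C=0$ recalled above plays for algebraic ones. The essential input I would use (the topological counterpart of that vanishing, cf.~\cite{avttc}) is: in a topological triangulated category, for any exact triangle $A\st{2\cdot 1_A}\To A\st{\iota}\To C\st{\pi}\To\S A$, the endomorphism $2\cdot 1_C$ factors through both structure maps, namely $2\cdot 1_C=\iota\circ\eta\circ\pi$ for a suitable $\eta\colon\S A\To A$ (the image of the stable Hopf map). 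Only this factorisation will be used.

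I would dispose of the second assertion first, since there the contradiction is immediate. Let $G\colon\C{T}\To\C{F}(R)$ be exact with $\C{T}$ topological, fix $X\in\C{T}$, and put $M=G(X)$, a finitely generated free module. Applying the additive exact functor $G$ to a triangle $X\st{2\cdot 1_X}\To X\st{\iota}\To C\st{\pi}\To\S X$ and to the relation $2\cdot 1_C=\iota\circ\eta\circ\pi$, and using that by Theorem~\ref{rara} a cone of $2\cdot 1_M$ in $\C{F}(R)$ is isomorphic to $M\st{2}\To M\st{2}\To M\st{2}\To M$ (so both structure maps are $2\cdot 1_M$), I obtain $2\cdot 1_M=(2\cdot 1_M)\circ G(\eta)\circ(2\cdot 1_M)=4\cdot G(\eta)$. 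Since $4=0$ in $R$ the right-hand side vanishes, so $2\cdot 1_M=0$; but $2\cdot 1_M$ is the matrix $2\cdot I$, which is nonzero whenever $M\neq 0$ because $2\neq 0$ in $R$. Hence $G(X)=0$, and $G$ is trivial.

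For the first assertion let $F\colon\C{F}(R)\To\C{T}$ be exact with $\C{T}$ topological, and set $A=F(R)$; exactness gives $\S A\cong A$. The same computation applied to $A$ — using $\iota=\pi=2\cdot 1_A$ and $4\cdot 1_A=F(4\cdot 1_R)=0$ — yields $2\cdot 1_A=0$. The point where the two directions differ is that vanishing of $2\cdot 1_A$ need not force $A=0$ in a general topological category, whereas in $\C{F}(R)$ it does. To finish I would feed $2\cdot 1_A=0$ back into the distinguished triangle $F(R\st2\To R\st2\To R\st2\To R)$: all three of its maps are now $2\cdot 1_A=0$, so it reads $A\st{0}\To A\st{0}\To A\st{0}\To\S A$. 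Applying the cohomological functor $\hom(A,-)$ gives an exact sequence $\endo(A)\st{0}\To\endo(A)\st{0}\To\endo(A)$, and exactness at the middle term forces $\endo(A)=0$, whence $1_A=0$ and $A=0$. As every object of $\C{F}(R)$ is a finite sum of copies of $R$, this shows $F$ is trivial.

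The genuine obstacle is supplying the topological input itself — the factorisation of $2\cdot 1_C$ through the cone maps $\iota$ and $\pi$ — since this is precisely where the hypothesis ``topological'' must be turned into a concrete relation; it is the analogue of the classical identity $2\cdot 1_{S/2}=i\eta p$ in the stable homotopy category and is the heart of the matter. Once that relation is in hand, everything reduces to the short diagram-chases above, the only real asymmetry being that in $\C{F}(R)$ the equation $2\cdot 1=0$ already annihilates objects because $\endo(R^n)=M_n(R)$ with $2\neq 0$, while for functors out of $\C{F}(R)$ one needs the extra triangle argument of the previous paragraph.
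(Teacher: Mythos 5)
Your strategy coincides with the paper's own: both arguments play the relation $2\cdot 1_C=i\eta q$, valid for cones of multiplication by $2$ in a topological triangulated category, against the defining feature of $\C{F}(R)$ that a cone of $2\cdot 1_M$ is again $2\cdot 1_M$, and then invoke $4=0$ in $R$. The paper packages this as an orthogonality statement between \emph{hopfian} and \emph{exotic} objects (Proposition~\ref{orthogonal}); your two calculations are exactly the special cases of that proposition needed for Theorem~\ref{raro}, and they are correct. They are in fact slightly leaner than the paper's: the paper's definition of a Hopf map includes the condition $2\eta=0$, which its proof of Proposition~\ref{orthogonal} genuinely uses, whereas in both of your computations the factorization gets pre- and post-composed with multiplication by $2$, so the factor $4$ appears for free and $2\eta=0$ is never needed. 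Your finishing move in the first direction (deducing $\endo(A)=0$ from the all-zero exact triangle) is also a harmless variant of the paper's divisibility argument.

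The one substantive issue is the input you do not prove: the existence of the factorization $2\cdot 1_C=\iota\eta\pi$ for \emph{every} object of \emph{every} topological triangulated category. You flag it as ``the heart of the matter,'' and that is accurate — it is precisely the paper's Proposition~\ref{Hopf maps}, and the paper proves it rather than citing it, because it is not a formal consequence of the triangulated axioms; the hypothesis ``topological'' must be used at the model-category level. The paper's argument: given an object $A$ of $\ho\C{M}$ for a stable model category $\C{M}$, the universal property of the Bousfield--Friedlander category $\spec$ of spectra, in the form of the Schwede--Shipley theorem \cite{ss}, produces a Quillen pair whose derived left adjoint is an exact functor $\ho\spec\to\ho\C{M}$ sending the sphere spectrum to (an object isomorphic to) $A$; since exact functors transport Hopf maps, the classical relation $2\cdot 1_{S/2}=i\eta q$ for the mod-$2$ Moore spectrum yields the relation for $A$. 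So your proposal is complete only modulo this one result: to make it self-contained you would have to supply this reduction to the universal example (or locate the statement precisely in \cite{avttc}); everything else in your write-up matches the paper's proof in substance.
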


\subsection*{Acknowledgements}

We are grateful to Bernhard Keller for helpful conversations on the results 
of this paper, and to Amnon Neeman, who suggested the 
possibility of constructing a triangulated structure on $\C{F}(\Z/4)$ 
by using Heller's theory~\cite{shc}.

In the original version of this note the first author alone
constructed the triangulation of the category $\C{F}(\Z/4)$ and proved that
it does not admit any model. The second author joined the project later
by providing a simpler and more general proof 
that the triangulation is not topological.
The third author's contribution was an old preprint on the example
considered in Remark~\ref{rem-eps}, which provided some guidance for
the other results.

\subsection*{The triangulated categories}\label{ttc}

Let $\C{T}$ be an additive category and let $\S\colon
\C{T}\st{\sim}\r\C{T}$ be a self-equivalence that we call \emph{translation functor}. A \emph{candidate triangle} $(f,i,q)$ in $(\C{T},\Sigma)$ is a diagram 
\begin{equation}\label{tri}
A\st{f}\To B\st{i}\To C\st{q}\To \S A,
\end{equation}
where $if$, $qi$, and
$(\S f)q$ are zero morphisms. 
A \emph{morphism} of candidate triangles $(\alpha,\beta,\gamma)\colon (f,i,q)\r(f',i',q')$ is a commutative diagram
\begin{equation*}
\xymatrix{A\ar[r]^f\ar[d]_{\alpha}&B\ar[r]^i\ar[d]_{\beta}&C\ar[r]^q\ar[d]^{\gamma}&\Sigma A\ar[d]^{\Sigma \alpha}\\
A'\ar[r]_{f'}&B'\ar[r]_{i'}&C'\ar[r]_{q'}&\Sigma A'} 
\end{equation*}
The category of candidate triangles is additive. The \emph{mapping cone} of the morphism $(\alpha,\beta,\gamma)$ is the candidate triangle
\begin{equation*}
\xymatrix@C=35pt{B\oplus A'\ar[r]^-{\begin{scriptsize}
\left(\hspace{-5pt}\begin{array}{cc}
-i\hspace{-7pt}&0\\
\beta\hspace{-7pt}&f'
\end{array}\hspace{-6pt}\right)\end{scriptsize}
}& C\oplus B'\ar[r]^-{\begin{scriptsize}
\left(\hspace{-5pt}\begin{array}{cc}
-q\hspace{-7pt}&0\\
\gamma\hspace{-7pt}&i'
\end{array}\hspace{-6pt}\right)
              \end{scriptsize}
}& \Sigma A\oplus C'\ar[r]^-{\begin{scriptsize}
\left(\hspace{-5pt}\begin{array}{cc}
-\Sigma f\hspace{-7pt}&0\\
\Sigma \alpha\hspace{-7pt}&q'
\end{array}\hspace{-6pt}\right)\end{scriptsize}}& \Sigma B\oplus\Sigma A'.}
\end{equation*}
A \emph{homotopy} $(\Theta,\Phi,\Psi)$ from $(\alpha,\beta,\gamma)$ to $(\alpha',\beta',\gamma')$ is given by morphisms
\begin{equation*}
\xymatrix{A\ar[r]^f\ar@<-.5ex>[d]_-{\alpha}\ar@<.5ex>[d]^-{\alpha'}&
B\ar[r]^i\ar@<-.5ex>[d]_-{\beta}\ar@<.5ex>[d]^-{\beta'}\ar[ld]_<(.3)\Theta&
C\ar[r]^q\ar@<-.5ex>[d]_-{\gamma}\ar@<.5ex>[d]^-{\gamma'}\ar[ld]_<(.3)\Phi&
\Sigma A\ar@<-.5ex>[d]_-{\Sigma\alpha}\ar@<.5ex>[d]^-{\Sigma\alpha'}\ar[ld]_<(.3)\Psi\\
A'\ar[r]_{f'}&B'\ar[r]_{i'}&C'\ar[r]_{q'}&\Sigma A'} 
\end{equation*}
such that
$$\begin{array}{rclcrclcrcl}
\beta'-\beta\!\!\!&=\!\!\!&\Phi i+f'\Theta,&&\gamma'-\gamma\!\!\!&=\!\!\!&\Psi q+i'\Phi,&&\Sigma(\alpha'-\alpha)\!\!\!&=\!\!\!&\Sigma(\Theta f)+q'\Psi.
\end{array}$$
We say in this case that the morphisms are \emph{homotopic}.
The mapping cones of two homotopic morphisms are isomorphic.
A \emph{contractible triangle} is a candidate triangle such that the identity is homotopic to the zero morphism. A homotopy $(\Theta,\Phi,\Psi)$ from $0$ to $1$ is called a \emph{contracting homotopy}. Any morphism from or to a contractible triangle is always homotopic to zero.

A \emph{triangulated category} is a pair $(\C{T},\Sigma)$ as above together with a collection of candidate triangles, called \emph{distinguished} or \emph{exact triangles}, satisfying the following properties. The family of exact triangles is closed under isomorphisms. The candidate triangle
\begin{equation}\label{trivial}
A\st{1}\To A\To 0\To \Sigma A,
\end{equation}
is exact. Any morphism $f\colon A\r B$ in $\C{T}$ can be extended to an exact triangle like (\ref{tri}). A candidate triangle (\ref{tri}) is exact if and only if its \emph{translate}
\begin{equation*}
B\st{-i}\To C\st{-q}\To \S A \st{-\Sigma f}\To \S B,
\end{equation*}
is exact. Any commutative diagram
\begin{equation*}
\xymatrix{A\ar[r]^f\ar[d]_{\alpha}&B\ar[r]^i\ar[d]_{\beta}&C\ar[r]^q&\Sigma A\ar[d]^{\Sigma \alpha}\\
A'\ar[r]_{f'}&B'\ar[r]_{i'}&C'\ar[r]_{q'}&\Sigma A'} 
\end{equation*}
whose rows are exact triangles can be extended to a morphism whose mapping cone is also exact. This non-standard set of axioms for triangulated categories is equivalent to the classical one, see \cite{triang}, and works better for the purposes of this paper.

Now we are ready to prove Theorem \ref{rara}.

\begin{proof}[Proof of Theorem \ref{rara}]
Given an object $X$ in $\C{F}(R)$ we consider the candidate triangle $X_2$ defined as
\begin{equation}\label{ut}
X\st{2}\To X\st{2}\To X\st{2}\To X.
\end{equation}

We are going to prove that the category $\C{F}(R)$ has a triangulated category structure with identity translation functor where the exact triangles are the candidate triangles isomorphic to the direct sum of a contractible triangle and a candidate triangle of the form~\eqref{ut}.

The family of exact triangles is closed under isomorphisms by definition. The candidate triangle~\eqref{trivial} is contractible, and hence exact. 
The ring $R$ is a quotient of a discrete valuation ring with maximal ideal 
generated by $2$, see~\cite[Corollary 3]{sclr};
therefore any morphism $f\colon A\r B$ in $\C{F}(R)$ 
can be decomposed up to isomorphism as
$$f=\left(\begin{array}{ccc}
1&0&0\\
0&2&0\\
0&0&0
\end{array}\right)\colon A= W\oplus X\oplus Y\To W\oplus X\oplus Z=B.$$
Then $f$ is extended by the direct sum of~\eqref{ut} 
and the contractible triangle
\begin{equation*}
\xymatrix@C=35pt{W\oplus Y\ar[r]^-{\begin{scriptsize}
\left(\hspace{-5pt}\begin{array}{cc}
1\hspace{-7pt}&0\\
0\hspace{-7pt}&0
\end{array}\hspace{-5pt}\right)\end{scriptsize}
}& W\oplus Z\ar[r]^-{\begin{scriptsize}
\left(\hspace{-5pt}\begin{array}{cc}
0\hspace{-7pt}&0\\
0\hspace{-7pt}&1
\end{array}\hspace{-5pt}\right)
              \end{scriptsize}
}& Y\oplus Z\ar[r]^-{\begin{scriptsize}
\left(\hspace{-5pt}\begin{array}{cc}
0\hspace{-7pt}&0\\
1\hspace{-7pt}&0
\end{array}\hspace{-5pt}\right)\end{scriptsize}}& W\oplus Y.}
\end{equation*}
The translate of a contractible triangle is also contractible, 
and the triangle~\eqref{ut} is invariant under translation.
This proves that the translate of an exact triangle is exact. Translating a candidate triangle six times yields the original one, therefore if a candidate triangle has an exact translate then the original candidate triangle is also exact.

We say that a candidate triangle
$A\st{f}\r B\st{i}\r C\st{q}\r A$
is a \emph{quasi-exact triangle} if
\begin{equation*}
A\st{f}\To B\st{i}\To C\st{q}\To A\st{f}\To B,
\end{equation*}
is an exact sequence of $R$-modules. The exact triangles are all quasi-exact.

Now we are going to show that any diagram of candidate triangles
\begin{equation}\label{fill2}
\xymatrix{A\ar[r]^f\ar[d]_{\alpha}&B\ar[r]^i\ar[d]_{\beta}&C\ar[r]^q&A\ar[d]^{\alpha}\\
A'\ar[r]_{f'}&B'\ar[r]_{i'}&C'\ar[r]_{q'}&A'} 
\end{equation}
with exact rows can be completed to a morphism with exact mapping cone.

Suppose that the
upper row in \eqref{fill2} is contractible and the lower row is quasi-exact. Since $f'\alpha=\beta f$ then $f'\alpha q=0$; since $C$ is projective, there exists $\gamma'\colon C\r C'$ such that $q' \gamma'=\alpha q$. Let $(\Theta,\Phi,\Psi)$ be a contracting homotopy for the upper row. Then $\gamma=\gamma'+(i'\beta-\gamma'i)\Phi$ completes (\ref{fill2}) to a morphism of candidate triangles.

If the upper row in (\ref{fill2}) is quasi-exact and the lower row is contractible then (\ref{fill2}) can also be completed to a morphism. This can be shown directly, but it also follows from the previous case since we have a duality functor 
$$\hom_R(-,R)\colon\C{F}(R)\st{\sim}\To\C{F}(R)^{op},$$
which preserves contractible triangles and quasi-exact triangles. Here we use that $R$ is injective as an $R$-module, see \cite[Example 3.12]{lmr}.

If the upper and the lower rows in (\ref{fill2}) are $X_2$ and $Y_2$, respectively, then $\gamma=\beta+2\delta$ extends (\ref{fill2}) to a morphism of candidate triangles for any $\delta\colon X\r Y$. 


This proves that any diagram like (\ref{fill2}) with exact rows can be completed to a morphism $\varphi=(\alpha,\beta,\gamma)$. Now we have to check that the completion can be done in such a way that the mapping cone is exact. Suppose that the upper and the lower rows are $X_2\oplus T$ and $Y_2\oplus T'$, respectively, with $T$ and $T'$ contractible. The morphism $\varphi$ is given by a matrix of candidate triangle morphisms
$$\varphi=\left(\begin{array}{cc}
\varphi_{11}&\varphi_{12}\\
\varphi_{21}&\varphi_{22}
\end{array}\right)\colon X_2\oplus T\To Y_2\oplus T',$$
where $\varphi_{ij}=(\alpha_{ij},\beta_{ij},\gamma_{ij})$.
Here $\varphi_{12}$, $\varphi_{21}$ and $\varphi_{22}$ are homotopic to $0$ since either the source or the target is contractible, therefore the mapping cone of $\varphi$ is isomorphic to the mapping cone of 
$$\psi=\left(\begin{array}{cc}
\varphi_{11}&0\\
0&0
\end{array}\right)\colon X_2\oplus T\To Y_2\oplus T',$$
which is the direct sum of the mapping cone of $\varphi_{11}$ and two contractible triangles, $T'$ and the translate of $T$. 

We can suppose that
$$\alpha_{11}=\left(\begin{array}{ccc}
1&0&0\\
0&2&0\\
0&0&0
\end{array}\right)\colon X= L\oplus M\oplus N\To L\oplus M\oplus P=Y.$$
Moreover, as we have seen above we can take $\gamma_{11}=\beta_{11}+2\delta$ for 
\begin{eqnarray*}
\delta&=&\left(\begin{array}{ccc}
0&0&0\\
0&1&0\\
0&0&0
\end{array}\right)\colon X= L\oplus M\oplus N\To L\oplus M\oplus P=Y.
\end{eqnarray*} 
We have $2\beta_{11}=2\alpha_{11}$, therefore $\beta_{11}=\alpha_{11}+2\Phi$ for some $\Phi\colon X\r Y$. Now we observe that $(\delta,\Phi,0)$ is a homotopy from $\varphi_{11}$ to $\zeta=(\alpha_{11}+2\delta,\alpha_{11}+2\delta,\alpha_{11}+2\delta)$, so the mapping cone of $\varphi_{11}$ is isomorphic to the mapping cone of $\zeta$. 

The mapping cone of $\zeta$ is clearly the direct sum of five candidate triangles, namely $M_2$, $N_2$, $M_2$ (once again), $P_2$, and the mapping cone of the identity $1\colon L_2\r L_2$, which is contractible. Therefore the mapping cone of $\zeta$ is exact, and also the mapping cone of $\varphi_{11}$, $\psi$ and $\varphi$.

It remains to show the uniqueness claim in Theorem~\ref{rara}.
In any triangulation, all contractible candidate triangles are
exact \cite[1.3.8]{triang}. The triangle $X_2$ is a finite direct sum of copies of $R_2$.
Hence every triangulation of $(\C{F}(R),\text{Id})$ which contains $R_2$
contains all the exact triangles which we considered above.
Two triangulations with the same translation functor necessarily agree
if one class of triangles is contained in the other,
so there is only one triangulation in which $R_2$ is exact.
This completes the proof.
\end{proof}

\begin{rem}\label{rem-exactness}
 The exact triangles in $\C{F}(R)$ can be characterized more
 intrinsically as follows.  Let $T$ be a quasi-exact triangle, which
 we can regard as a $\Z/3$-graded chain complex of free $R$-modules
 with $H_*(T)=0$.  As $T$ is free we have a short exact sequence
 $$2T\hookrightarrow T\st{2}\twoheadrightarrow 2T,$$ and the resulting long exact sequence in homology
 reduces to an isomorphism $\sigma\colon H_*(2T)\to H_{*-1}(2T)$.  As the
 grading is $3$-periodic we can regard $\sigma^3$ as an automorphism
 of $H_*(2T)$.  We claim that $T$ is exact if and only if
 $\sigma^3=1$.  One direction is straightforward: if $T$ is
 contractible then $H_*(2T)=0$, and if $T=X_2$ then $H_i(2T)=2X$ for
 all $i$ and $\sigma$ is the identity.  The converse is more fiddly
 and we will not go through the details.  It would be nice to give a
 proof of Theorem~\ref{rara} based directly on this definition of
 exactness, but we do not know how to do so.
\end{rem}

\begin{rem}\label{rem-eps} Let $k$ be a field of characteristic~2.
The same arguments as in the proof of Theorem~\ref{rara}
show that the category 
$\C{F}(k[\eps]/\eps^2)$ of finitely generated free modules
over the algebra $k[\eps]/\eps^2$ of dual numbers admits a triangulation
with the identity translation functor and such that the diagram
\begin{equation*}
k[\eps]/\eps^2 \st{\eps}\To k[\eps]/\eps^2 \st{\eps}\To k[\eps]/\eps^2 
\st{\eps}\To k[\eps]/\eps^2  
\end{equation*}
is an exact triangle. However, this triangulated category
is both algebraic and topological, and hence, 
from our current perspective, less interesting.

Indeed $\C{F}(k[\eps]/\eps^2)$ is an algebraic and topological triangulated category for any field $k$. The translation functor $\Sigma=\tau^*$ is the restriction of scalars along the $k$-algebra automorphism $\tau\colon k[\eps]/\eps^2\r k[\eps]/\eps^2$ with $\tau(\eps)=-\eps$, and
\begin{equation}\label{eps triangle2}
k[\eps]/\eps^2 \st{\eps}\To k[\eps]/\eps^2 \st{\eps}\To k[\eps]/\eps^2 
\st{\eps}\To \tau^* k[\eps]/\eps^2  
\end{equation}
is an exact triangle. An algebraic model for this triangulated category was obtained by Keller in \cite{otoc}. Keller's model is a differerential graded (dg) $k$-category. Here we exhibit an alternative model, which is a dg $k$-algebra $A$ such that 
$\C{F}(k[\eps]/\eps^2)$ is exact equivalent
to the category of compact objects 
in the derived category $\C{D}(A)$ of dg (right) $A$-modules. This shows that $\C{F}(k[\eps]/\eps^2)$ is both algebraic and topological.

Let $A= k\td{a,u,v,v^{-1}}/I$  be the free graded $k$-algebra generated by
$a$, $u$, $v$ and $v^{-1}$ in degrees $\abs{a}=\abs{u}=0$ and $\abs{v}=-1$ 
modulo the two-sided homogeneous ideal $I$ generated by 
$$a^2,\;\; au+ua+1,\;\; av+va\;\text{ and }\; uv+vu.$$ 
The differential $d:A\to A$ is determined by $$d(a)=u^2v,\;\; d(u)=0,\;\; d(v)=0,$$ and the Leibniz rule.
The ungraded algebra $H_0(A)$ is isomorphic to the dual numbers
$k[\eps]/\eps^2$, where $\eps=[u]$ is the homology class of the cycle~$u$. 
The graded algebra $H_*(A)$ is determined by this isomorphism since $[v]$ is a unit in degree $-1$ such that $\eps\cdot [v]+[v]\cdot\eps =0$.

We claim that the $0$-dimensional homology functor 
$$ H_0 \colon  \C{D}^c(A)  \To  \C{F}(k[\eps]/\eps^2) $$
is an equivalence of categories, where the left hand side is 
the full subcategory of those dg $A$-modules 
whose $H_0$ is finitely generated over $k[\eps]/\eps^2$.

Let $M$ be any dg $A$-module and let $[x]\in H_0(M)$ be a homology
class with $[x]\cdot\eps=0$. 
We choose a representing cycle $x$ and an element $y$ with $d(y)=xu$; 
then the element $z=yuv-xa$ is a cycle with $x=zu-d(ya)$, 
so $[x]=[z]\cdot \eps$ in homology. 
So every homology class which is annihilated by $\eps$ 
is also divisible by $\eps$, 
which proves that $H_0(M)$ is a free $k[\eps]/\eps^2$-module. Moreover, the translation functor in $\C{D}(A)$ is the usual shift of complexes $M\mapsto M[1]$ and the natural isomorphism $$\tau^*H_0(M)\cong H_0(M[1])=H_{-1}(M)$$
is given by $[x]\mapsto [xv]$.

The universal case of this is $M=A\{x,y\}$, the free graded right module over the underlying graded algebra of $A$ with $\abs{x}=0$ and $\abs{y}=1$. We can endow $M$ with a dg $A$-module structure with
$d(x)=0$ and $d(y)=xu$, so that $M$ is just the mapping cone of the chain
map $A\st{u}\to A$.  
The cycle $z=yuv-xa\in M$ gives a quasiisomorphism $A\to M$. 
Using this, we obtain an exact triangle 
$$ A  \st{u}\To A \st{u}\To A \st{uv}\To A[1]$$
in $\C{D}^c(A)$ which maps to the exact triangle~\eqref{eps triangle2}.
The rest of the proof that $H_0$ is an exact equivalence from 
$\C{D}^c(A)$ to $\C{F}(k[\eps]/\eps^2)$ is relatively straightforward,
and we omit it.
\end{rem}

We still owe the proof that the triangulated category $\C{F}(R)$ 
does not admit non-trivial exact functors to or from 
a topological triangulated category.
For this purpose we introduce two intrinsic properties that an object $A$ of
a triangulated category may have.

A {\em Hopf map} for an object $A$ is a morphism $\eta:\Sigma A\to A$
which satisfies $2\eta=0$ and such that for some (hence any) exact triangle
\begin{equation}\label{eq:tri_top}
A \st{2}\To A \st{i}\To  C \st{q}\To  \Sigma A
\end{equation}
we have $i\eta q=2\cdot 1_C$. An object which admits a Hopf map will be termed \emph{hopfian}.
We note that the class of hopfian objects is closed
under isomorphism, suspension and desuspension. If $F$
is an exact functor with natural isomorphism $\tau:\Sigma F\cong F\Sigma$
and $\eta:\Sigma A\to A$ a Hopf map for $A$,
then the composite $F(\eta)\tau:\Sigma F(A)\To F(A)$
is a Hopf map for $F(A)$.

We call an object $E$ {\em exotic} if there exists an exact triangle
\begin{equation}  \label{eq:tri_exotic}
E \st{2}\To E \st{2}\To  E \st{h}\To  \Sigma E
\end{equation}
for some morphism $h:E\to\Sigma E$.
We note that the class of exotic objects is closed
under isomorphism, suspension and desuspension.
Every exact functor takes exotic objects to exotic objects.
Every object of the triangulated category $\C{F}(R)$ of Theorem~\ref{rara}
is exotic.

We remark without proof that the morphism $h$ which makes \eqref{eq:tri_exotic} exact is
unique and natural for morphisms between exotic objects. We show 
below that $h$ is of the form $h=2\psi$ for an isomorphism $\psi:E\to\S E$.

\begin{rem}\label{rem-no odd exotic}   
The integer~2 plays a special role in the definition
of exotic objects, which ultimately comes from the sign which
arises in the rotation of a triangle. In more detail, suppose that there is
an exact triangle
\begin{equation}  \label{eq:tri_exotic_n}
E \st{n}\To E \st{n}\To  E \st{h}\To  \Sigma E 
\end{equation}
for some integer $n$. We claim that if $E$ is nonzero, 
then $n\equiv 2\mod 4$ 
and $4\cdot 1_E=0$, so that the
triangle~\eqref{eq:tri_exotic_n} equals the `exotic' 
triangle~\eqref{eq:tri_exotic} with $n=2$.
Indeed, we can find a morphism 
$\psi:E\to \Sigma E$ which makes the diagram
$$ \xymatrix{
E\ar[r]^-n\ar@{=}[d]& E\ar[r]^-n\ar@{=}[d]& E\ar[r]^-h \ar@{..>}[d]^\psi&
\S E \ar@{=}[d]\\
E\ar[r]_-n & E\ar[r]_-h& \S E\ar[r]_-{-n}& \S E} $$
commute, and $\psi$ is an isomorphism. We have $n\psi=h=-n\psi$
which gives $2n\psi=0$. Since $\psi$ is an isomorphism, this forces
$2n\cdot 1_E=0$. Exactness of~\eqref{eq:tri_exotic_n} lets us choose
a morphism $f:E\to E$ with $2\cdot 1_E=n\cdot f$.
But then $4\cdot 1_E= n^2 f^2=0$. So if $n$ is divisible by~4, then $E=0$.
If $n$ is odd, then $E$ is anhihilated by~4 and the odd number~$n^2$,
so also $E=0$.
\end{rem}

Hopf maps are incompatible with the property of being exotic
in the sense that these two classes of objects are orthogonal.

\begin{prop}\label{orthogonal} 
Let $\C{T}$ be a triangulated category, $A$ a hopfian object and $E$ an exotic object. Then the morphism 
groups $\C{T}(A,E)$ and $\C{T}(E,A)$ are trivial. 
In particular, every exotic and hopfian object is a zero object.
\end{prop}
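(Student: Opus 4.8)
The plan is to show that any morphism with a hopfian object on one side and an exotic object on the other must vanish, by exploiting the two defining identities: for a Hopf map we have $i\eta q = 2\cdot 1_C$ in the triangle \eqref{eq:tri_top}, while for an exotic object the map $E\st{2}\to E$ is a genuine part of an exact triangle \eqref{eq:tri_exotic}. The key tension is that multiplication by $2$ behaves very differently in the two settings: on a hopfian object the degree-$2$ self-map factors in a controlled way through its cone, whereas on an exotic object the identity $4\cdot 1_E=0$ (noted in Remark~\ref{rem-no odd exotic}) severely constrains how $2$ can act. I expect the main obstacle to be bookkeeping the signs and the factorizations through cones, and correctly chaining the two triangles so that the Hopf identity and the exotic identity cancel each other against the fact that $2\cdot 1$ need not be zero a priori.

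First I would treat $\C{T}(A,E)$. Fix a Hopf map $\eta\colon\Sigma A\to A$ with $2\eta=0$ and an exact triangle $A\st{2}\to A\st{i}\to C\st{q}\to\Sigma A$ realizing $i\eta q=2\cdot 1_C$. Let $g\colon A\to E$ be arbitrary. Since $E$ is exotic, consider the exact triangle $E\st{2}\to E\st{2}\to E\st{h}\to\Sigma E$. The idea is to compute $2\cdot g$ in two ways. On one hand, because the first map of the exotic triangle is multiplication by $2$, applying $\C{T}(A,-)$ to \eqref{eq:tri_exotic} shows that $2\cdot g$ factors through the second map $E\st{2}\to E$ whenever $g$ itself is divisible by $2$, and more usefully that the connecting behaviour forces a relation controlled by $h$. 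On the other hand, applying $\C{T}(-,E)$ to the triangle \eqref{eq:tri_top} for $A$ lets me use $i\eta q=2\cdot 1_C$ to rewrite $2\cdot g = g\circ(2\cdot 1_A)$ in terms of the Hopf map $\eta$, which satisfies $2\eta=0$. Pushing these two descriptions against each other, together with $4\cdot 1_E=0$, should collapse $g$ to zero after multiplying by a unit; the precise manipulation is where the calculation lives.

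For $\C{T}(E,A)$ I would run the dual argument, using the translated Hopf triangle and the fact, recorded in the excerpt, that the class of hopfian objects is closed under suspension and desuspension while exotic objects map to exotic objects under exact functors. Concretely, a morphism $g\colon E\to A$ can be composed into the two triangles in the opposite order; the exotic triangle now supplies a factorization on the source side and the Hopf identity $i\eta q=2\cdot 1_C$ on the target side, and the relations $2\eta=0$ and $4\cdot 1_E=0$ again force $g=0$. The hardest single step will be organizing the factorization so that the degree-$2$ maps line up: since neither $2\cdot 1_A$ nor $2\cdot 1_E$ is assumed to vanish, one cannot simply argue that $2$ kills the morphism group, and the cancellation must come specifically from the interaction of $i\eta q=2\cdot 1_C$ with the exotic triangle. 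Once both $\C{T}(A,E)=0$ and $\C{T}(E,A)=0$ are established, the final clause is immediate: an object that is simultaneously hopfian and exotic has $\C{T}(A,A)=0$, so its identity morphism is zero and $A$ is a zero object.
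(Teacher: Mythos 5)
Your outline assembles the correct ingredients---the identity $i\eta q=2\cdot 1_C$, the relations $2\eta=0$ and $4\cdot 1_E=0$, and the plan of playing the triangles \eqref{eq:tri_top} and \eqref{eq:tri_exotic} against each other---but it stops exactly where the mathematical content begins: ``the precise manipulation is where the calculation lives'' and ``should collapse $g$ to zero after multiplying by a unit'' are placeholders for the argument, not the argument, and that deferred step is the whole proposition. Three concrete pieces are missing. First, a bridge between the two triangles: the Hopf identity lives on the cone $C$, so to use it against a morphism $f\colon E\to A$ (or $A\to E$) you need a map involving $C$ that is compatible with $f$; the long exact sequences obtained by applying $\C{T}(A,-)$ and $\C{T}(-,E)$, which is all your sketch invokes, do not by themselves produce one. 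The paper gets it from the completion axiom of triangulated categories: the commutative square expressing $f\circ 2=2\circ f$ (both triangles begin with multiplication by $2$) extends to a morphism of triangles $(f,f,g)$ from \eqref{eq:tri_exotic} to \eqref{eq:tri_top}, with $g\colon E\to C$ satisfying $if=2g$ and $qg=(\Sigma f)h$. Second, you never use the fact, stated immediately before the proposition, that $h=2\psi$ for an isomorphism $\psi\colon E\to\Sigma E$ (Remark~\ref{rem-no odd exotic}); this is what closes the chain: $if=2g=(i\eta q)g=i\eta(\Sigma f)h=i\eta(\Sigma f)2\psi=i(2\eta)(\Sigma f)\psi=0$, using $2\eta=0$. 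Your phrase ``multiplying by a unit'' gestures at $\psi$ but the step is never made. Third, the bootstrap: the chain yields only $if=0$, not $f=0$; exactness of \eqref{eq:tri_top} then gives $f=2f'$ for some $f'\colon E\to A$. Since this divisibility holds for \emph{every} morphism $E\to A$, one iterates, $f=2f'=4f''$, and only then does $4\cdot 1_E=0$ force $f=0$. Your plan of ``computing $2\cdot g$ in two ways'' aims instead at showing $2g=0$ directly, and nothing in the sketch indicates how that would follow; indeed a group $G$ with $4G=0$ and $\ker(2)=2G$ (which is all the exotic long exact sequence gives) need not vanish, as $G=\Z/4$ shows, so the Hopf map must enter through an explicit factorization, not merely through the exact sequences.

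Two parts of your proposal are sound: the final clause (an object whose identity morphism vanishes is a zero object), and the idea of obtaining the second hom-group by dualization---the paper does exactly this, observing that ``hopfian'' and ``exotic'' are self-dual properties, so vanishing of one hom-group in $\C{T}$ is vanishing of the other in $\C{T}^{op}$ with the opposite triangulation. But since the first vanishing statement is not actually established, neither is its dual, and as written the proposal proves neither $\C{T}(E,A)=0$ nor $\C{T}(A,E)=0$.
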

\begin{proof} 
Let $\eta:\Sigma A\to A$ be a Hopf map. Given any morphism $f:E\to A$ there exists $g\colon E\r C$ such that $(f,f,g)$ is a morphism from \eqref{eq:tri_exotic} to \eqref{eq:tri_top}, and hence
$if=2g=i\eta q g=i\eta(\Sigma f)h=i\eta(\Sigma f)2\psi=0$. Here we use the notation of Remark \ref{rem-no odd exotic} for $n=2$ and the fact that $2\eta=0$. Moreover, 
\eqref{eq:tri_top} is exact, so $f=2f'$ for some $f'\colon E\r A$. This equation follows for any morphism $f\colon E\r A$, hence $f$ is divisible by any power of $2$, but $4\cdot 1_E=0$, so $f=0$.

The proof of $\C{T}(A,E)=0$ is similar. Alternatively, we can reduce 
this statement to the previous one by observing
that the properties of being exotic and hopfian are self-dual.
In other words, an object $E$ is exotic in a triangulated category $\C{T}$
if and only if $E$ is exotic as an object of the opposite category 
$\C{T}^{op}$ with the opposite triangulation, and similarly for Hopf maps.
\end{proof}

\begin{prop}\label{Hopf maps}
Every object of a topological triangulated category is hopfian.
\end{prop}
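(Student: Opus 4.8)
The plan is to reduce to the homotopy category of a stable model category and, there, to produce a Hopf map for every object by letting the classical stable Hopf map act. First I would reduce to the universal case. Since an exact equivalence carries Hopf maps to Hopf maps (as recorded just before the statement), and a full triangulated subcategory inherits Hopf maps from the ambient category, it suffices to show that every object $A$ of $\ho(\mathcal{M})$ is hopfian, where $\mathcal{M}$ is a stable model category. Indeed, if $A$ and $\Sigma A$ lie in a full triangulated subcategory $\C{T}$, then a Hopf map $\eta_A\colon\Sigma A\to A$ constructed in $\ho(\mathcal{M})$ already lies in $\C{T}$ by fullness, and one may choose the exact triangle \eqref{eq:tri_top} with all terms and maps in $\C{T}$; the defining relations then hold in $\C{T}$ because they already hold in $\ho(\mathcal{M})$.

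Second, I would invoke the action of the stable homotopy category on $\ho(\mathcal{M})$. Because $\mathcal{M}$ is stable, the framings of \cite[Ch.~5]{hmc} upgrade to such an action (compare \cite{avttc}); concretely, for each object $A$ this provides an exact functor $-\otimes A$ from the stable homotopy category to $\ho(\mathcal{M})$, additive on morphism groups, with $S^0\otimes A = A$ and $S^1\otimes A = \Sigma A$. Applying this functor to the Moore triangle $S\st{2}\To S\st{i_0}\To S/2\st{q_0}\To S^1$ yields an exact triangle of the form \eqref{eq:tri_top}, with $C=(S/2)\otimes A$, $i=i_0\otimes A$ and $q=q_0\otimes A$. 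Setting $\eta_A=\eta\otimes A\colon \Sigma A\to A$, where $\eta\in\pi_1^s$ is the stable Hopf map regarded as a map $S^1\to S^0$, additivity together with the classical relation $2\eta=0$ gives $2\eta_A=0$.

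Third, I would transport the classical identity $i_0\,\eta\,q_0 = 2\cdot 1_{S/2}$ in the stable homotopy category, which expresses the well-known failure of the mod~$2$ Moore spectrum to carry a ring structure. Applying the exact functor $-\otimes A$ to this composite $S/2\st{q_0}\To S^1\st{\eta}\To S^0\st{i_0}\To S/2$ and using that it respects composition yields $i\,\eta_A\,q = 2\cdot 1_C$. Thus $\eta_A$ satisfies both requirements, so it is a Hopf map for $A$ and $A$ is hopfian.

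The main obstacle is the second step: setting up the action with enough structure, namely the exactness of $-\otimes A$ and the identification of the image of the Moore triangle with an honest triangle of the form \eqref{eq:tri_top}, together with pinning down the classical relation $i_0\,\eta\,q_0 = 2\cdot 1_{S/2}$ in the right conventions so that it survives transport along $-\otimes A$. This framing-theoretic input is exactly what distinguishes topological from algebraic triangulated categories, and it is the reason the analogous statement fails in the algebraic setting, where the corresponding composite vanishes and hence $2\cdot 1_C=0$.
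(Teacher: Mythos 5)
Your proposal is correct and takes essentially the same route as the paper: both reduce to the universal example by producing, for each object $A$ of $\ho\C{M}$, an exact functor from the stable homotopy category sending the sphere spectrum to $A$, and then transport the classical relations $2\eta=0$ and $i\,\eta\,q=2\cdot 1_{S/2}$ for the mod-2 Moore spectrum. The only difference is technical packaging — the paper obtains the functor as the left derived functor of the Quillen pair $X\wedge-\colon\spec\to\C{M}$ given by the universal property of spectra \cite[Thm.~5.1 (1)]{ss}, rather than through the framing action of $\ho\spec$ — and your explicit treatment of the full-subcategory reduction spells out a step the paper passes over with ``we can assume''.
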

\begin{proof}
We can assume that the topological triangulated
category is $\ho\C{M}$ for a stable model category $\C{M}$.
We use that for every object $A$ of $\ho\C{M}$ there exists
an exact functor $F:\ho\spec\to\ho\C{M}$ from the stable homotopy category 
which takes the sphere spectrum to an object isomorphic to $A$.
Here $\spec$ is the category of `sequential spectra' of simplicial sets
with the stable model structure of Bousfield and Friedlander~\cite[Sec.~2]{BF}.
To construct $F$ we let $X$ be a cofibrant-fibrant object 
of the model category $\C{M}$
which is isomorphic to $A$ in the homotopy category $\ho\C{M}$.
The universal property of the model category of spectra~\cite[Thm.~5.1 (1)]{ss}
provides a Quillen adjoint functor pair
$$\xymatrix@C=15mm{ \spec \quad \ar@<.4ex>[r]^-{X\wedge\ } &
\ar@<.4ex>[l]^-{\hom(X,-)} \quad \C{M} }$$
whose left adjoint $X\wedge$ takes the sphere spectrum $S$ to
$X$, up to isomorphism.
The left derived functor of the left Quillen functor 
$X\wedge -:\spec\to\C{M}$ is exact and can serve as the required functor $F$.

Since exact functors preserve Hopf maps it thus suffices 
to treat the `universal example', i.e., 
to exhibit a Hopf map for the sphere spectrum as an object 
of the stable homotopy category.
The stable homotopy class $\eta:\Sigma S\to S$ of the Hopf map
from the 3-sphere to the 2-sphere precisely has this property, hence the name.
In more detail, we have an exact triangle
$$ S \st{2\cdot 1_S}\To  S \st{i}\To  S/2 \st{q}\To  \Sigma S $$
in the stable homotopy category, where $S/2$ is the mod-2 Moore spectrum;
then the morphism $2\cdot 1_{S/2}$ factors as $i\eta q$, and moreover
$2\eta=0$.
\end{proof}

In topological triangulated categories, something a little
stronger than Proposition~\ref{Hopf maps} is true
in that Hopf maps can be chosen {\em naturally} for all objects. 
However, we don't need this and so we omit the details. 
Now we can give the

\begin{proof}[Proof of Theorem~\ref{raro}.]
Every object of the triangulated category $\C{F}(R)$ is exotic and
every object of a topological triangulated category is hopfian.
So an exact functor from one type of triangulated category to
the other hits objects which are both exotic and hopfian.
But such objects are trivial by Proposition~\ref{orthogonal}.
\end{proof}

\begin{rem} The only special thing we use in the proof of 
Theorem~\ref{raro} about {\em topological} triangulated categories 
is that therein every object has a Hopf map. Hopf maps
can also be obtained from other kinds of structure that were proposed by 
different authors in order to `enrich' or `enhance' 
the notion of a triangulated category. So our argument also proves
that the triangulated category $\C{F}(R)$ of Theorem~\ref{rara}
does not admit such kinds of enrichments, and every exact functors 
to or from such enriched triangulated categories is trivial.
For example, if $\C{T}$ is an algebraic triangulated category,
then for some (hence any) exact triangle~\eqref{eq:tri_top} 
we have $2\cdot 1_C=0$; so the zero map is a Hopf map.

Another example of such extra structure is the notion of a 
{\em triangulated derivator}, due to Grothendieck~\cite{der}, and the
closely related notions of a {\em stable homotopy theory}
in the sense of Heller~\cite{ht, shts} or a {\em system of triangulated diagram
categories} in the sense of Franke~\cite{utctc}. 
In each of these settings, the stable homotopy category 
is the underlying category of the free example on one generator 
(the sphere spectrum).
We do not know a precise reference of this fact for triangulated derivators, 
but we refer to~\cite[Cor.~4.19]{puekd}
for the `unstable' (i.e., non-triangulated) analog.
In Franke's setting the universal property is formulated as
Theorem~4 of~\cite{utctc}.
These respective universal properties in the enhanced context provide,
for every object $A$, an exact functor $(\ho\spec)^{\text{cp}}\to\C{T}$
which takes the sphere spectrum $S$ to $A$, up to isomorphism.
This functors sends the classical Hopf map for the sphere spectrum 
to a Hopf map for $A$.

Another kind of structure which underlies many triangulated categories
is that of a {\em stable infinity category} as investigated by Lurie
in~\cite{sic}.
The appropriate universal property of the infinity category of spectra
is established in~\cite[Cor.~17.6]{sic}, so again every object
of the homotopy category of any stable, presentable infinity category
has a Hopf map. 
\end{rem}

\end{document}